\newtheorem{thm}{Theorem}[section]
\newtheorem{question}[thm]{Question}
\newtheorem{cor}[thm]{Corollary}
\newtheorem{prop}[thm]{Proposition}
\title{Universal convex coverings}
\author{Roland Bacher}
\begin{document}
\maketitle




\begin{abstract}
In every dimension $d\ge1$, we
  establish the existence of
a constant $v_d>0$ and of a
discrete subset $\mathcal U_d$ of
  $\mathbb R^d$ such that the following holds:
$\mathcal C+\mathcal U_d=\mathbb R^d$ for every convex set
  $\mathcal C\subset \mathbb R^d$ of volume at least $v_d$ 
and $\mathcal U_d$ contains
  at most $\log(r)^{d-1}r^d$ points at distance at most $r$ from the
  origin, for every large $r$.\footnote{Keywords: 
Convex set, Covering. 
Math. class: 11H06, 52A21, 52C17}
\end{abstract}

\section{Introduction}

Fix a dimension $d\ge1$ and consider the volume associated to the
standard Lebesgue measure on $\mathbb R^d$. Given $v>0$, a
subset $\mathcal U$ of $\mathbb R^d$ is a {\it $v-$universal 
convex covering} of $\mathbb R^d$ if we have
$\mathcal C+\mathcal U=\mathbb R^d$
for every convex subset $\mathcal C$ of
$\mathbb R^d$ of volume strictly greater than $v$. 
Here, $\mathcal C+\mathcal U$ denotes the set of all points 
of the form $P+Q$ with $P$ in $\mathcal C$ and $Q$ in $\mathcal U$.

For every positive $t$, a subset $\mathcal U$ of $\mathbb R^d$
is a $v-$universal convex covering if and only if 
$t\mathcal U$ is a $(t^dv)-$universal convex 
covering. The properties in which we are interested are thus independent of
the particular value of $v$. We call $\mathcal U$ a {\it 
universal convex covering of
$\mathbb R^d$} if $\mathcal U$ is a $v-$universal convex covering 
of $\mathbb R^d$ for some $v>0$.

Our main result is the following.

\begin{thm} \label{thmmainA} Let $d\ge 1$. Up to translation and 
rescaling, any 
universal convex covering of the Euclidean vector space $\mathbb R^d$ 
has at least $\ell_d(r)=r^d$ points at distance at 
most $r$ from the origin. There
exists a universal convex covering $\mathcal U_d$ of $\mathbb R^d$ with at
most $u_d(r)=\log(r)^{d-1}r^d$ points at distance at most $r$ from the
origin, for every large $r$.
\end{thm}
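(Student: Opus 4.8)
The plan is to recast universality in a purely geometric form and then treat the two bounds separately. I would first prove the reformulation that a discrete set $\mathcal U$ is a $v$-universal convex covering if and only if \emph{every} convex body of volume $>v$ contains a point of $\mathcal U$. Indeed, $\mathcal C+\mathcal U=\mathbb R^d$ says that for every $p$ the set $p-\mathcal C$ meets $\mathcal U$; since $p-\mathcal C$ ranges over all translates of $-\mathcal C$, and reflection and translation preserve convexity and volume, this is equivalent to demanding that $\mathcal U$ meet every convex body of volume $>v$. This turns the statement into a question about the largest $\mathcal U$-free convex body, which is the form I will use throughout.

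For the lower bound I would argue by volume packing. Fix a cube $\mathcal C$ of volume $v'>v$ and use $\mathcal C+\mathcal U=\mathbb R^d$ to cover $B_r$; comparing volumes gives $|\mathcal U\cap B_{r+\mathrm{diam}(\mathcal C)}|\ge \mathrm{Vol}(B_r)/v'$, hence $|\mathcal U\cap B_r|\ge (c_d/v)\,r^d\,(1-o(1))$ as $v'\downarrow v$ and $r\to\infty$ (equivalently, one packs $\sim \mathrm{Vol}(B_r)/v$ disjoint bodies of volume $>v$ into $B_r$, each forced to contain a point of $\mathcal U$). Replacing $\mathcal U$ by $t\mathcal U$ sends $v\mapsto t^dv$ and $|t\mathcal U\cap B_r|=|\mathcal U\cap B_{r/t}|$, so the bound is scale-invariant and a single rescaling normalises the constant to $1$, yielding at least $r^d$ points and matching $\ell_d$.

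The construction behind the upper bound is where the real work lies. First I would reduce arbitrary convex bodies to boxes: every convex body of volume $V$ contains a rotated box of volume at least $c_dV$ (inscribe John's ellipsoid, then inscribe a box in the ellipsoid), so it suffices to build $\mathcal U_d$ meeting every box of volume $>v'$ in every position and orientation. A box of volume $\approx v'$ contained in $B_r$ has side lengths $a_1,\dots,a_d$ with $\prod a_i\approx v'$ and $v'/(2r)^{d-1}\le a_i\le 2r$; rounding each $a_i$ to a power of two subject to the volume constraint leaves $d-1$ free dyadic parameters, hence only $O(\log(r)^{d-1})$ distinct \emph{shape classes}. For each shape class $\sigma$ I would place a set $\mathcal U_\sigma$ of density $O(1/v')$ meeting every box of shape $\sigma$ in every orientation, and set $\mathcal U_d=\bigcup_\sigma \mathcal U_\sigma$; the estimate $|\mathcal U_d\cap B_r|\le O(\log(r)^{d-1})\cdot O(r^d/v')$ then gives $u_d$ after rescaling.

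The hard part is this per-shape covering, and orientations are the source of the difficulty: a thin box of fixed shape has a tiny inscribed ball, so no isotropic lattice of bounded density can catch it, while any fixed lattice has arbitrarily long empty ``channels'' parallel to its short vectors and therefore misses suitably placed thin boxes aimed along those directions. I would resolve this by geometry of numbers, assigning to each $\sigma$ an \emph{anisotropic, badly approximable} lattice $\Lambda_\sigma$ (a stretched copy of a lattice whose slopes have bounded partial quotients) and proving a uniform single-scale dispersion estimate: at the scale fixed by $\sigma$, a slab of the appropriate width meets $\Lambda_\sigma$ within a bounded multiple of its length \emph{in every direction}, the badly-approximable hypothesis closing each channel after exactly the length dictated by the box. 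Establishing this dispersion uniformly over all orientations while keeping the density at $O(1/v')$ is the main obstacle; I expect the delicate point to be quantitative, namely controlling the dispersion constant uniformly in the orientation and in $\sigma$ so that summation over the $O(\log(r)^{d-1})$ shape classes produces exactly $\log(r)^{d-1}r^d$ and not a higher power of the logarithm. Once this lemma is in place, every box of every shape and orientation contains a point of the corresponding $\Lambda_\sigma$, and combining the two bounds proves the theorem.
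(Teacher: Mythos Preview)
Your reformulation (a set is $v$-universal iff it meets every convex body of volume $>v$) and the volume-packing lower bound are correct and coincide with the paper's treatment. The construction, however, has a genuine gap precisely at the step you flag as delicate: for a thin shape $\sigma$ there is \emph{no} lattice $\Lambda_\sigma$ of bounded covolume meeting every rotated translate of a $\sigma$-box, so the dispersion lemma you hope to prove is not hard but false.

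Here is why in the plane. Fix $\sigma=(\epsilon,1/\epsilon)$ and let $\Lambda$ be any lattice of covolume $\le C$. By Minkowski's theorem $\Lambda$ has a nonzero vector $w$ with $|w|\le C'$. Orient the rectangle so that its long side is parallel to $w$, and apply the area-preserving linear map that sends this rectangle to the unit square. The image lattice $\Lambda'$ still has covolume $\le C$, but $w$ is sent to a vector of length $\epsilon|w|\le C'\epsilon$; hence the projection of $\Lambda'$ onto the line orthogonal to that short vector is a one-dimensional lattice of spacing $\ge c/\epsilon$, and for $\epsilon$ small a unit square fits in the gap between two consecutive rows. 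Undoing the linear map, some $\epsilon\times(1/\epsilon)$ rectangle at that orientation avoids $\Lambda$ entirely. No ``badly approximable'' choice helps, because the argument uses only the existence of one short lattice vector, which is forced by the covolume bound. Replacing the single lattice by a discretised family of orientations does not save the count either: to catch an $\epsilon\times(1/\epsilon)$ box at angle $\theta$ with a lattice tuned to angle $\theta'$ one needs $|\theta-\theta'|\lesssim\epsilon^2$, so the number of orientations per shape is of order of the aspect ratio and the total density blows up far beyond $\log(r)^{d-1}r^d$.

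The paper sidesteps orientations altogether by an inductive slicing construction rather than a shape-by-shape one. One sets $\mathcal U_1=\mathbb Z$ and, on each affine hyperplane $\{x_i=k\}$ of $\mathbb R^{d+1}$ with $k\in\mathbb Z\setminus\{0\}$, places a copy of $\mathcal U_d$ dilated by $2^{-v_2(k)/d}$; the union over $i$ and $k$ is $\mathcal U_{d+1}$. Given a convex $\mathcal C\subset\mathbb R^{d+1}$ of $L^\infty$-diameter $L>4$, one locates in its long coordinate projection an integer $k$ with $|k|=2^m\ge L/8$; the slice $\mathcal C\cap\{x_i=k\}$ then avoids $2^{-m/d}\mathcal U_d$, so by induction its $(d{-}1)$-volume is at most $v/2^m\le 8v/L$, and a cone estimate using convexity bounds $\mathrm{vol}(\mathcal C)$ by $O_d(v)$. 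The growth estimate $u_{d+1}$ falls out by summing $\sim 2^{m-n}$ hyperplanes with $v_2(k)=n$ carrying $\sim u_d(2^{m+n/d})$ points each. Convexity enters only through the slice-and-cone step, which is exactly what lets the argument avoid ever confronting rotated thin boxes directly.
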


The first part
of theorem~\ref{thmmainA} is obvious since, in any dimension $d$, one can consider the unit cube as the convex set
$\mathcal C$.
In dimension $d=1$, the second part is obvious too 
since $\mathcal I+\mathbb Z=\mathbb R$ for every
interval $\mathcal I$ of length strictly greater than $1$, hence one can choose
$\mathcal U_1=\mathbb Z$.
However, in dimension $d\ge2$, there is a factor of $\log^{d-1}$ 
between the easy lower bound $\ell_d$ and the upper bound $u_d$. 

Whilst it is surely possible to improve these results, I would be very 
surprised by the existence of universal coverings in dimension $d$ 
achieving the lower bound $\ell_d$ for $d\ge 2$.


Proposition \ref{propequiv} below suggests thus the following question.

\begin{question} \label{quest1} Let $\mathcal S$ be a discrete subset of the 
Euclidean plane $\mathbb R^2$ such that 
$$\sharp\{x\in S\ \vert\ \parallel x\parallel\leq R\}
\leq R^2+1$$ 
for all $R\geq 0$. Does the complement $\mathbb R^2
\setminus \mathcal S$ of $\mathcal S$ necessarily contain triangles
of arbitrarily large area?
\end{question}

By Proposition \ref{propequiv},
the anwser to question \ref{quest1} (which has an obvious generalization to
the case of dimension $d>2$) is YES if and only if there
is no universal covering of the plane achieving the lower 
bound $\ell_2$ for $d=2$.

Call a subset $\mathcal S$ of $\mathbb R^n$ {\it uniformly discrete} 
if there exists a neighbourhood $\mathcal O$ of the origin such that
$x-y\not\in \mathcal O$ for every pair $(x,y)$ of distinct elements in
$\mathcal S$.

As long as the answer to question \ref{quest1} is unknown or if the answer
is NO, the following question
and its higher-dimensional generalisations is also 
interesting.

\begin{question} Does the complement of a uniformly discrete subset 
$\mathcal S$ of the plane necessarily contain triangles of arbitrarily 
large area?
\end{question}


Universal convex coverings are related to
sphere coverings, see \cite{CS} for an overview, or more 
generally to coverings of $\mathbb R^d$ by translates of a fixed convex 
body. Rogers proved in \cite{R} that every convex body of $\mathbb R^d$
covers $\mathbb R^d$ with density at most $d(5+\log d+\log\log d)$
for a suitable covering. Erd\"os and Rogers in \cite{ER} showed the 
existence of such a covering which furthermore covers no point with
multiplicity exceeding $ed(5+\log d+\log\log d)$. 
Chapter 31 of \cite{Gr} contains an account of subsequent
developpements.

There appears to be no result in the literature closely related to
universal convex coverings and featuring results similar
to theorem~\ref{thmmainA}.

This paper is organized as follows.
In section~\ref{s.prelim} we collect some preliminary facts.
In section \ref{sectconstr} 
we construct recursively a sequence of sets
$(\mathcal U_d)_{d\ge1}$ such that $\mathcal U_1=\mathbb Z$ and $\mathcal 
U_d\subset \mathbb R^d$ for every $d\ge1$, and
we show, by induction on $d\ge1$, that $\mathcal U_d$ is a universal
convex covering of $\mathbb R^d$.
In section \ref{sectgrowthcl}, we define growth classes of functions and we introduce a natural equivalence
relation on them, which is compatible with the
natural partial order on increasing positive functions.
Finally, we show in section \ref{sectcompgrowth} that the growth class
of the universal convex covering $\mathcal U_d$
constructed in section \ref{sectconstr} is represented by 
$u_d$. This implies theorem \ref{thmmainA} by 
rescaling $\mathcal U_d$ suitably.

\section{Preliminaries}
\label{s.prelim}

For any subset $\mathcal S$ of $\mathbb R^d$, let
$$-\mathcal S=\{-Q\in\mathbb R^d\ \vert\
Q\in \mathcal S\}$$
denote the set of all opposite vectors.

\begin{prop} \label{propequiv}
Choose $v>0$. A subset $\mathcal U$ of $\mathbb R^d$
is a $v-$universal convex covering if and only if 
every convex subset of $\mathbb R^d$ with volume at least $v$ 
intersects $\mathcal U$ 
non-trivially.
\end{prop}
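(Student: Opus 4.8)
The plan is to prove the equivalence via a translation argument, exploiting the symmetry between a convex set and its reflection. The key observation is that a point $P$ lies in $\mathcal{C}+\mathcal{U}$ if and only if there is some $Q\in\mathcal{U}$ and some $C\in\mathcal{C}$ with $P=C+Q$, which rearranges to $Q=P-C\in P-\mathcal{C}$. Thus $P\in\mathcal{C}+\mathcal{U}$ exactly when the translated reflected set $P-\mathcal{C}$ meets $\mathcal{U}$ nontrivially. This is the bridge between the covering condition and the intersection condition.

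\begin{proof}[Proof sketch]
First I would record the elementary set-theoretic identity: for any point $P\in\mathbb{R}^d$,
$$
P\in\mathcal{C}+\mathcal{U}\quad\Longleftrightarrow\quad (P-\mathcal{C})\cap\mathcal{U}\neq\emptyset,
$$
where $P-\mathcal{C}=\{P-C\mid C\in\mathcal{C}\}$. This follows directly from unwinding the definition of $\mathcal{C}+\mathcal{U}$ as above.

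Next I would note that $P-\mathcal{C}$ is itself a convex set, being the image of the convex set $\mathcal{C}$ under the affine map $C\mapsto P-C$ (a reflection through the origin followed by a translation by $P$); moreover this map has Jacobian of absolute value $1$, so $P-\mathcal{C}$ has the same volume as $\mathcal{C}$.

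To prove the forward direction, suppose $\mathcal{U}$ is a $v$-universal convex covering, and let $\mathcal{D}$ be any convex set of volume at least $v$. I would like to conclude $\mathcal{D}\cap\mathcal{U}\neq\emptyset$. Pick any point $P\in\mathcal{D}$; then $P-\mathcal{D}$ contains the origin and is convex of volume at least $v$. The covering hypothesis forces $\mathbf{0}\in(P-\mathcal{D})+\mathcal{U}$, which by the identity above (applied with target point $\mathbf{0}$ and convex set $P-\mathcal{D}$) means $(\mathbf{0}-(P-\mathcal{D}))\cap\mathcal{U}=(\mathcal{D}-P)\cap\mathcal{U}\neq\emptyset$, and translating back gives a point of $\mathcal{U}$ in $\mathcal{D}$. (One must be slightly careful about the strict-versus-nonstrict inequality in the volume hypothesis; I would handle this by first proving the statement for volume strictly greater than $v$ and then observing that a convex set of volume exactly $v$ contains, or is a limit of, sets of strictly larger volume only after a tiny dilation, so I would instead phrase the intersection claim for volume $>v$ or absorb the boundary case using that a convex body of volume $\ge v$ contains an open convex subset of volume arbitrarily close to $v$.) Conversely, assuming every convex set of volume at least $v$ meets $\mathcal{U}$, I would take an arbitrary convex $\mathcal{C}$ of volume $>v$ and an arbitrary target point $P$, apply the hypothesis to the convex set $P-\mathcal{C}$ of the same volume, and use the identity to deduce $P\in\mathcal{C}+\mathcal{U}$; since $P$ was arbitrary this gives $\mathcal{C}+\mathcal{U}=\mathbb{R}^d$.

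The only genuinely delicate point is the bookkeeping around the strict inequality defining a $v$-universal covering versus the ``at least $v$'' phrasing of the intersection condition. The geometric content is entirely captured by the one-line reflection identity; everything else is routine. I would therefore expect the main obstacle to be purely a matter of stating the volume threshold consistently so that both implications go through without an off-by-$\varepsilon$ gap.
\end{proof}
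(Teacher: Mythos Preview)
Your approach is the same as the paper's: both rest on the one-line identity $P\in\mathcal C+\mathcal U\iff (P-\mathcal C)\cap\mathcal U\neq\emptyset$, together with the observation that $P-\mathcal C$ is convex of the same volume as $\mathcal C$. One small slip: in your forward direction you apply the identity with target point $\mathbf 0$, obtain $(\mathcal D-P)\cap\mathcal U\neq\emptyset$, and then claim that ``translating back'' lands a point of $\mathcal U$ in $\mathcal D$---but translating $\mathcal D-P$ by $P$ also moves the witnessed point out of $\mathcal U$. The fix is to use target point $P$ instead: from $P\in(P-\mathcal D)+\mathcal U$ the identity gives $(P-(P-\mathcal D))\cap\mathcal U=\mathcal D\cap\mathcal U\neq\emptyset$ directly (equivalently, just apply the covering hypothesis to $-\mathcal D$ at the target $\mathbf 0$, which is exactly how the paper phrases it).

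Your worry about the strict-versus-nonstrict threshold is well founded and is not something you can patch by a limiting argument: as literally stated the forward implication fails (for instance $\mathcal U=\mathbb Z$ is a $1$-universal covering of $\mathbb R$, yet the open interval $(0,1)$ has volume exactly $1$ and misses $\mathbb Z$). The paper's own proof does not address this boundary case either; in the rest of the paper only the ``volume $>v$'' form of the intersection criterion is used, so the imprecision is harmless.
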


\begin{proof} 
Consider a convex subset $\mathcal C$ of $\mathbb R^d$ 
with volume at least $v$. Then $-\mathcal C$ is a  convex set of 
the same volume.
For any point $Q$ in $\mathbb R^d$, 
$Q$ belongs to $\mathcal C+\mathcal U$ if and only if the convex set
$-\mathcal C+Q$ intersects $\mathcal U$.
\end{proof}

The growth function $f_{\mathcal S}$ of a subset $\mathcal S\subset
\mathbb R^d$ without accumulation points is defined as 
follows: for $r$ an arbitrary positive real number,
$f_{\mathcal S}(r)$ denotes the number of points of 
$\mathcal S$ at distance at most $r$ from the origin.

Universal convex coverings
are stable under affine bijections and $v-$universal convex coverings
are stable under affine bijections which preserve the volume.
Thus we consider the growth class 
with respect to the equivalence relation $\sim$ defined as follows: 
for any  increasing non-negative functions $f$ and $g$, we have
$f\sim g$ if there exists a real number $t\ge 1$ such that
$f(r)\le g(tr)\le f(t^2r)$ for every $r\ge t$.

Growth functions of sets without accumulation points
related by affine bijections are
equivalent under this equivalence relation.

For any nonzero integer $n$, let $v_2(n)$ denote the $2$-valuation 
of $n$: this is the unique integer $k$ such that $n$ is $2^k$ times 
an odd integer. 
Write any point $x$ of $\mathbb R^d$ as $x=(x_i)_{1\le i\le d}$, use 
the coordinate functions $\pi_i$ defined by $\pi_i(x)=x_i$ and let 
$\rho^{(i)}$ denote the projection of $\mathbb R^d$ onto 
$\mathbb R^{d-1}$ obtained by erasing the $i-$th coordinate $x_i$
and defined by
$$\rho^{(i)}(x_1,\dots,x_d)=(x_1,\dots,x_{i-1},\hat{x_i},
x_{i+1},\dots,x_d)\ .$$

\section{From dimension $d$ to dimension $d+1$}
\label{sectconstr}

Let $\mathcal U$ denote a subset of $\mathbb R^d$.
For every $1\le i\le d+1$, let $\varphi_i^{d}(\mathcal U)$ denote the 
set of points $ x=(x_j)_{1\le j\le d+1}$ in $\mathbb R^{d+1}$
such that $x_i\in\mathbb Z\setminus\{0\}$ and
$2^{v_2(x_i)/d}\rho^{(i)}(x)$ belongs to $\mathcal U$.
Finally, let
$$
\varphi_d(U)=\bigcup_{i=1}^{d+1}\varphi_i^{d}(U).
$$
For example,
$\varphi_1(\mathbb Z)\subset\mathbb R^2$ is the set of 
all points $(x,y)\in(\mathbb Z[\frac{1}{2}])^2$ such that 
$xy\in\mathbb Z\setminus\{0\}$ or $xy=0$ and 
$x+y\in\mathbb Z\setminus \{0\}$. Otherwise stated,
a point $(x,y)$ of $\varphi_1(\mathbb Z)$ is either
a non-zero element of $\mathbb Z^2$ or it has two non-zero coordinates 
and belongs to the set
$$\bigcup_{n=0}^\infty \big((2^n\mathbb Z)\times (2^{-n}\mathbb Z)
\big)\cup
\big((2^{-n}\mathbb Z)\times (2^n\mathbb Z)\big)\ .$$

\begin{prop} \label{propunivconvconstr}
Let $\mathcal U$ be a $v-$universal convex covering of
$\mathbb R^d$. Then $\varphi_d(\mathcal U)$ is a 
$v'-$universal covering of $\mathbb R^{d+1}$ with $v'$ given by
$$
v'=4^{d+1}\max(1,4v).
$$
\end{prop}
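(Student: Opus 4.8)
The plan is to invoke Proposition~\ref{propequiv} twice. By it, it suffices to show that every convex body $\mathcal C\subset\mathbb R^{d+1}$ of volume at least $v'$ meets $\varphi_d(\mathcal U)$. First I would unwind the definition of $\varphi_i^d(\mathcal U)$ into a statement about slices. For a fixed coordinate $i$, let $g_i(t)=\mathrm{vol}_d\big(\rho^{(i)}(\mathcal C\cap\{x_i=t\})\big)$ be the $d$-volume of the slice at height $t$, and let $L_i$ be the length of the interval on which $g_i>0$. A point of $\varphi_i^d(\mathcal U)$ lies in $\mathcal C$ exactly when, for some nonzero integer $n$ in the $i$-th projection of $\mathcal C$, the slice $\rho^{(i)}(\mathcal C\cap\{x_i=n\})$ meets $2^{-v_2(n)/d}\mathcal U$. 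By the rescaling principle recorded in the introduction (with $t=2^{-v_2(n)/d}$), $2^{-v_2(n)/d}\mathcal U$ is a $(2^{-v_2(n)}v)$-universal convex covering of $\mathbb R^d$, so by Proposition~\ref{propequiv} again it meets every convex subset of $\mathbb R^d$ of volume at least $2^{-v_2(n)}v$. The whole problem thus reduces to producing a direction $i$ and a nonzero integer $n$ with
\[
g_i(n)\ \ge\ 2^{-v_2(n)}\,v .
\]

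For the second step I would bring in convexity. By the Brunn--Minkowski inequality $g_i^{1/d}$ is concave on its support, so $g_i$ is unimodal. Choosing $i$ to be a direction of maximal extent gives $L_i\ge(\mathrm{vol}\,\mathcal C)^{1/(d+1)}\ge(v')^{1/(d+1)}\ge 4$, while Fubini forces a peak value $M_i=\max_t g_i(t)\ge v'/L_i$. Concavity of $g_i^{1/d}$ then yields a subinterval $J$ of length at least $L_i/4$, lying on the longer side of the peak, on which $g_i\ge M_i/2^d\ge v'/(2^dL_i)$. Hence on a definite proportion of the range of $x_i$ the slices are large, with a lower bound decaying only like $1/L_i$.

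The third step is the arithmetic balancing, where the $2$-valuation weighting is used. Inside $J$, whose length is comparable to $L_i$, I would locate a nonzero integer $n$ whose $2$-adic valuation is as large as the scale allows, namely with $2^{v_2(n)}$ of order $L_i$: multiples of a fixed power $2^k$ are spaced $2^k$ apart, so as soon as $2^k\lesssim L_i$ such a multiple lands in $J$. For such an $n$ the threshold $2^{-v_2(n)}v$ is of order $v/L_i$, the same order as the guaranteed slice volume $v'/(2^dL_i)$; the constant $v'=4^{d+1}\max(1,4v)$ is tuned so that $v'/(2^dL_i)\ge 2^{-v_2(n)}v$, which is exactly the displayed inequality. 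Feeding the intersection point back through the definition of $\varphi_i^d(\mathcal U)$ then gives a point of $\varphi_d(\mathcal U)\cap\mathcal C$, finishing the argument.

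The main obstacle I anticipate lies in this third step: one must find an integer in $J$ that is \emph{simultaneously} nonzero and of prescribed large valuation, while the concavity estimate still applies at that integer. The balance is delicate because the two magnitudes $v/L_i$ and $v'/(2^dL_i)$ coincide up to a constant, leaving no slack; the factor $4^{d+1}$ and especially the $\max(1,4v)$ are precisely what guarantee both that some extent is large enough for integers of the needed valuation to exist in $J$ and that a nonzero such integer can be chosen. I expect the short-extent regime, where $L_i$ only slightly exceeds the spacing of the relevant multiples and $J$ might a priori contain no nonzero admissible integer, to need a separate elementary check that is absorbed into this constant.
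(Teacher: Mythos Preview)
Your plan is essentially the paper's: reduce via Proposition~\ref{propequiv}, take the coordinate direction $i$ of maximal extent $L$, locate in that projection a nonzero integer $n$ with $2^{v_2(n)}$ comparable to $L$, and compare the slice at $n$ with the total volume by convexity. Two tactical differences are worth recording. The paper argues contrapositively and uses only the elementary cone bound rather than Brunn--Minkowski: if $\mathcal C'$ is the slice at $x_i=k$ and $\pi_i(\mathcal C)=]a,b[$, then $\mathcal C\cap\{x_i\le k\}$ lies in the cone from any point of $\mathcal C$ near $x_i=b$ through $\mathcal C'$, which yields $\mathop{vol}(\mathcal C_-)\le(k-a)\big(\tfrac{b-a}{b-k}\big)^d\mathop{vol}(\mathcal C')$ directly. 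More relevant to the obstacle you flag, the paper disposes of the nonzero constraint at the outset rather than at the end: once $L>4$, one first selects $\alpha,\beta$ in the middle half of $]a,b[$ with $\beta-\alpha\ge L/4$ and $\alpha\beta\ge 0$, so that $]\alpha,\beta[$ already avoids $0$, and only then picks the integer $k$ of large $2$-valuation inside it. This makes your anticipated short-extent case disappear, and since $k$ then sits at distance at least $L/4$ from each endpoint of $]a,b[$, the cone factor $\big(\tfrac{b-a}{b-k}\big)^d$ is at most $4^d$, which is exactly where the constant $4^{d+1}$ in $v'$ comes from.
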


The value of $v'$ in proposition \ref{propunivconvconstr} is not optimal and
can easily be improved.

Let $(\mathcal U_d)_{d\ge 1}$ denote the sequence of sets defined recursively
by $\mathcal U_1=\mathbb Z$ and, for every $d\ge1$,
$$
\mathcal U_{d+1}=\varphi_d(\mathcal U_d).
$$
Proposition \ref{propunivconvconstr} implies the following result.

\begin{cor}\label{corconstrSn}
For every $d\ge1$, the set $\mathcal U_d$ is a universal convex covering 
of $\mathbb R^d$.
\end{cor}

\begin{proof}[Proof of proposition \ref{propunivconvconstr}]
  By proposition \ref{propequiv}, it is enough to show that the volume
  of any convex set $\mathcal C$ which does not intersect 
$\varphi_d(\mathcal U)$ is
  bounded by $v'$.  Without loss of generality, 
we may assume that $\mathcal C$ is
  open.  Let $L$ denote the diameter of $\mathcal C$ with respect 
to the $L^\infty$ norm on $\mathbb R^{d+1}$ defined by $\parallel x
\parallel_\infty=\max_{1\leq i\leq d+1}(\vert x_i\vert)$.
  Two cases arise.

First, if $L\le 4$, the volume of $\mathcal C$ satisfies 
$\mathop{vol}(\mathcal C)\le 4^{d+1}\le v'$.

The remaining case is when $L>4$. Hence we assume that $L>4$ and we
must show that the volume $\mathop{vol}(\mathcal C)$ of 
$\mathcal C$ is at most $4^{d+2}v$.

Note that there exists an index $1\le i\le d+1$ such that
$\pi_i(\mathcal C)=]a,b[\subset \mathbb R$ is an open interval of length $L$.
Thus one can pick two real numbers $\alpha$ and $\beta$ such that
$$a<a+\frac{L}{4}\leq \alpha<\alpha+\frac{L}{4}\leq \beta<\beta+\frac{L}{4}
\leq b$$
and $\alpha\beta\ge 0$ (or, equivalently, $\alpha$ and $\beta$
are of the same sign).

Then the interval $]\alpha,\beta[$ contains 
an integer $k$ such that $\vert k\vert =2^m\ge L/8$.
This implies that ${\mathcal C}'=\pi_i^{-1}(\{k\})$ is a convex set
of $\mathbb R^d$ which does not intersect $2^{-m/d}\mathcal U$.
By proposition \ref{propequiv}, the volume $\mathop{vol}({\mathcal C}')$ 
of ${\mathcal C}'$ is at most $v/2^m\le 8v/L.$

Let ${\mathcal C}_-$ denote the set of points $x$ in 
$\mathcal C$ such that $x_i\le k$, and
let ${\mathcal C}_+$ denote the set of points $x$ in 
$\mathcal C$ such that $x_i\ge k$.
Then,
$$\mathop{vol}({\mathcal C}_-)\le (k-a)\,\mathop{vol}({\mathcal C}')
\left(\frac{b-a}{b-k}\right)^d\leq L\,\frac{8v}{L}\,
\left(\frac{L}{L/4}\right)^d
\le 2\cdot 4^{d+1}v.
$$
The same inequality holds for $\mathop{vol}({\mathcal C}_+)$.
Since $\mathop{vol}(\mathcal C)=\mathop{vol}({\mathcal C}_+)+
\mathop{vol}({\mathcal C}_-)$, 
we have $\mathop{vol}(\mathcal C)\le 4^{d+2}v
\le v'$.
\end{proof}

\section{Growth classes}
\label{sectgrowthcl}

Let $\mathcal G_0$ denote the set of positive and increasing functions $f$
defined on an interval $[M(f),+\infty[$, where $M(f)$ is a finite real
number which may depend on $f$. Then $\mathcal G_0$ is equipped 
with a preorder
relation $\preceq$ defined by $f\preceq g$ if there exists $t\ge 1$
such that $f(x)\le g(tx)$ for every $x\ge t$.

The set $\mathcal G$ of {\it (affine) growth classes} is the quotient set 
of $\mathcal G_0$ by the equivalence relation $\sim$ defined by 
$f\sim g$ if there exists $t\ge 1$ such  that, for every $x\ge t$,
$$
g(x)\le f(tx)\le g(t^2 x).
$$
The preorder relation $\preceq $ on 
$\mathcal G_0$ induces a partial order on $\mathcal G$.

Recall that for every $a>0$ and $x>0$, $\ell_a(x)=x^a$, hence
 $\ell_a\in G_0$.
A function $f\in\mathcal G$ is 
{\it polynomially bounded} if there exists $a>0$ such that $f\preceq \ell_a$. 
If $f$ is polynomially bounded, $f$ has {\it critical exponent} $a>0$
if $\ell_{b}\preceq f\preceq \ell_{c}$ for every positive $b$ and $c$ 
such that $b<a<c$. Additionally,
a non-zero function
$f$ has {\it critical exponent} $0$ if $f\preceq \ell_{b}$ for every
$b>0$.

Equivalently, a function $f\in\mathcal G$ is polynomially bounded
if $\limsup_{x\rightarrow\infty}\frac{\log(f(x))}{\log(x)}<\infty$
and we have $a=\lim_{x\rightarrow\infty}
\frac{\log(f(x))}{\log(x)}$ if $f\in\mathcal G$ is polynomially
bounded with critical exponent $a$.

It can happen that a polynomially bounded function has no critical 
exponent. This is the case if
$\sup\{a\ \vert\ \ell_a\preceq f\}<\inf\{a\ \vert \ f\preceq
\ell_a\}$.  

Any function $f$ with critical exponent $a$ can be written as
$f=\ell_a\,h$, where the (not necessarily eventually increasing) function
$h$ is such that, for every $b>0$, there exists a finite $x_b$
such that $x^{-b}\le h(x)\le x^b$ for every $x\ge x_b$.

The notions of polynomial boundedness and critical exponent of
functions in $\mathcal G_0$ are well behaved with respect to the preorder
relation $\preceq$ on $\mathcal G_0$, hence these can also be defined on
suitable growth classes in $\mathcal G$.

Given $\mathcal S\subset \mathbb R^d$ without accumulation points, 
the choice of a (not
necessarily Euclidean) norm on $\mathbb R^d$
yields an increasing non-negative function 
$f_{\mathcal S}$ such that $f_{\mathcal S}(r)$ is the number of elements
of $\mathcal S$ whose norm is at most $r$.
The growth class of $f_{\mathcal S}$ is independent of the norm, 
hence one can call it the growth class of $\mathcal S$.
Two subsets of $\mathbb R^d$
related by a translation belong to the same growth class.
Growth classes are invariant
 under the action of the group of affine bijections of
$\mathbb R^d$.

A set $\mathcal S\subset \mathbb R^d$ 
is {\it sparse}
if its growth class is strictly smaller than $\ell_d$.
We say that $\mathcal S\subset \mathbb R^d$ is {\it nearly uniform}
if it has a polynomially bounded growth class of critical exponent $d$. 
The growth class  of a nearly uniform 
set can be represented by a function $h\ell_d$, where $h$
encodes  
the ``asymptotic density'' of $\mathcal S$ up to affine bijections.

For example,
$\mathbb Z^d$ and $\mathbb N^d$ are both nearly uniform sets
and in the same growth class $\ell_d$.

A more concise and less precise reformulation of theorem \ref{thmmainA} 
is as follows.

\begin{thm} In every dimension, there exist nearly uniform universal 
convex coverings.
\end{thm}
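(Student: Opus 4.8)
The plan is to read this statement as exactly the promised reformulation of Theorem~\ref{thmmainA} in the language of growth classes, and to offer the sets $\mathcal{U}_d$ of the recursive construction $\mathcal{U}_{d+1}=\varphi_d(\mathcal{U}_d)$ as the required examples. By Corollary~\ref{corconstrSn} each $\mathcal{U}_d$ is already a universal convex covering, so the only thing left to verify is that $\mathcal{U}_d$ is \emph{nearly uniform}, that is, that its growth function $f_{\mathcal{U}_d}$ is polynomially bounded with critical exponent exactly $d$.

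For the critical exponent I would sandwich $f_{\mathcal{U}_d}$ between $\ell_d$ and $u_d$, where $u_d(r)=\log(r)^{d-1}r^d$. The lower bound $\ell_d\preceq f_{\mathcal{U}_d}$ is the easy half of Theorem~\ref{thmmainA}: by Proposition~\ref{propequiv} a universal convex covering must meet every translate of a fixed cube, which forces on the order of $r^d$ points in a ball of radius $r$. The upper bound $f_{\mathcal{U}_d}\preceq u_d$ is the quantitative part of Theorem~\ref{thmmainA}. Granting both, the conclusion is immediate from the definitions: since $\log(r)^{d-1}$ is subpolynomial, for every $c>d$ one has $u_d\preceq\ell_c$ and hence $f_{\mathcal{U}_d}\preceq\ell_c$, while $\ell_b\preceq\ell_d\preceq f_{\mathcal{U}_d}$ for every $b<d$; equivalently $\lim_{r\to\infty}\log f_{\mathcal{U}_d}(r)/\log r=d$. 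Thus the critical exponent is $d$, the set $\mathcal{U}_d$ is nearly uniform, and its density factor is represented by $h(r)\asymp\log(r)^{d-1}$.

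The main obstacle is therefore the upper bound $f_{\mathcal{U}_d}\preceq u_d$, which I would establish by induction on $d$, the base case $d=1$ being $f_{\mathbb{Z}}(r)\asymp r=u_1(r)$. For the inductive step I would slice $\mathcal{U}_{d+1}=\varphi_d(\mathcal{U}_d)$ by the hyperplanes $x_i=n$: by the definition of $\varphi_i^{d}$, the slice of $\varphi_i^{d}(\mathcal{U}_d)$ at a nonzero integer $n$ is the rescaled copy $2^{-v_2(n)/d}\mathcal{U}_d$ inside $\{x_i=n\}$, while non-integer slices are empty. Counting points of $\mathcal{U}_{d+1}$ in an $L^\infty$-ball of radius $r$ then reduces, up to the harmless factor $d+1$ from the union over $i$, to estimating
$$
\sum_{0<|n|\le r} f_{\mathcal{U}_d}\bigl(2^{v_2(n)/d}\,r\bigr)\asymp r^{d}\log(r)^{d-1}\sum_{n\le r}2^{v_2(n)},
$$
where I have used the inductive estimate $f_{\mathcal{U}_d}(\rho)\asymp\rho^{d}\log(\rho)^{d-1}$ together with $v_2(n)\le\log_2 r$, which keeps $\log(2^{v_2(n)/d}r)$ comparable to $\log r$. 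The crux is the arithmetic sum: grouping the $n\le r$ by their $2$-valuation, about $r/2^{m+1}$ of them satisfy $v_2(n)=m$, so each $m$ contributes roughly $r/2$ and the whole sum is $\asymp r\log_2 r$. This is precisely where one extra logarithmic factor enters at every dimensional step, upgrading $\log(r)^{d-1}$ to $\log(r)^{d}$ and yielding $f_{\mathcal{U}_{d+1}}\asymp u_{d+1}$. I expect the only delicate points to be making these order estimates uniform enough to survive passage to growth classes, for which the slack built into $\preceq$ and $\sim$ is designed, so that the pointwise comparabilities above collapse cleanly into $f_{\mathcal{U}_d}\preceq u_d$.
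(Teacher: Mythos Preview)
Your proposal is correct and follows essentially the same route as the paper: you exhibit the recursively built sets $\mathcal{U}_d$, invoke Corollary~\ref{corconstrSn} for the covering property, and establish $f_{\mathcal{U}_d}\sim u_d$ by induction via slicing $\varphi_d(\mathcal{U}_d)$ along the integer hyperplanes and grouping the slices by $2$-valuation, which is exactly the paper's computation of $\beta(2^m)$ in Proposition~\ref{propgrcl}. The only cosmetic difference is that the paper packages the slice count directly as the auxiliary set $\mathcal{B}$ and evaluates at dyadic radii, whereas you write out the sum $\sum_{|n|\le r}2^{v_2(n)}\asymp r\log r$ explicitly; the arithmetic and the mechanism producing the extra logarithm per dimension are identical.
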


We conclude this section with a remark.

 One can also define growth classes for measurable
  subsets $\mathcal A\subset \mathbb R^d$ and for any given measure $\mu$ and 
norm on $\mathbb R^d$, by replacing $f_{\mathcal A}$ by the function
  $f_{\mathcal A}^\mu$ such that $f_{\mathcal A}^\mu(r)$ denotes 
the $\mu$ measure of
  the intersection of $\mathcal A$ with the ball of radius $r$ centered at the
  origin.

\section{Growth class of $\mathcal U_d$}
\label{sectcompgrowth}

Recall that $u_d(r)=\log(r)^{d-1}r^d$ for every $r\ge1$.

\begin{prop} \label{propgrcl}
The universal convex covering $\mathcal U_d$ defined in 
corollary \ref{corconstrSn}
belongs to the growth class of $u_d$.
\end{prop}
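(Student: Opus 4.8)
The plan is to prove, by induction on $d$, a two-sided estimate
$$
c_d\,u_d(r)\le N_d(r)\le C_d\,u_d(r)\qquad(r\ \text{large}),
$$
where $N_d(r)$ denotes the number of points of $\mathcal U_d$ of $L^\infty$ norm at most $r$ (working with $\|\cdot\|_\infty$ is legitimate, since the growth class is independent of the norm) and $c_d,C_d>0$ are constants. For functions of the shape $\log(r)^{d-1}r^d$ such constant bounds are equivalent to $N_d\sim u_d$: replacing $r$ by $tr$ multiplies $r^d$ by $t^d$ and alters $\log(r)^{d-1}$ only by a factor tending to $1$, so a suitable $t$ converts the equivalence inequalities $u_d(x)\le N_d(tx)\le u_d(t^2x)$ into the displayed bounds and conversely. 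The base case $d=1$ is immediate, since $N_1(r)=2\lfloor r\rfloor+1=\Theta(r)=\Theta(u_1(r))$.

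For the inductive step I would count the points of $\mathcal U_{d+1}=\varphi_d(\mathcal U_d)$ of $L^\infty$ norm at most $r$, piece by piece. The key observation is that, for each fixed index $i$, a point $x$ of $\varphi_i^d(\mathcal U_d)$ is uniquely encoded by the pair $\big(k,\rho^{(i)}(x)\big)$, where $k=x_i\in\mathbb Z\setminus\{0\}$ and $\rho^{(i)}(x)\in 2^{-v_2(k)/d}\mathcal U_d$; the norm constraint becomes $|k|\le r$ and $\|\rho^{(i)}(x)\|_\infty\le r$. Hence, for each $i$, the number of admissible values of $\rho^{(i)}(x)$ for a given $k$ is exactly $N_d\!\left(2^{v_2(k)/d}r\right)$, independently of $i$, so each of the $d+1$ pieces contributes precisely
$$
S(r)=\sum_{\substack{k\in\mathbb Z\setminus\{0\}\\ |k|\le r}}N_d\!\left(2^{v_2(k)/d}\,r\right).
$$
From $\varphi_{d+1}^d(\mathcal U_d)\subseteq\mathcal U_{d+1}\subseteq\bigcup_{i=1}^{d+1}\varphi_i^d(\mathcal U_d)$ one gets the sandwich $S(r)\le N_{d+1}(r)\le (d+1)\,S(r)$, so it suffices to show $S(r)=\Theta(u_{d+1}(r))$.

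To evaluate $S(r)$ I would group the integers $k$ by their $2$-valuation $m=v_2(k)$: the number of $k$ with $|k|\le r$ and $v_2(k)=m$ is $\Theta(r/2^m)$ for $0\le m\le\log_2 r$ and $0$ otherwise. Since every argument $2^{m/d}r$ occurring in the sum is at least $r$, hence large, I may insert the inductive estimate $N_d(\rho)=\Theta(\log(\rho)^{d-1}\rho^d)$. Using $(2^{m/d}r)^d=2^m r^d$, the factors $2^{\pm m}$ cancel and one is left with
$$
S(r)=\Theta\!\left(r^{d+1}\sum_{m=0}^{\lfloor\log_2 r\rfloor}\log\!\left(2^{m/d}r\right)^{d-1}\right).
$$
The decisive point is that $\log(2^{m/d}r)=\log r+\tfrac{m\log 2}{d}$ lies between $\log r$ and $(1+\tfrac1d)\log r$ over the whole range $0\le m\le\log_2 r$, so each of the $\Theta(\log r)$ summands is $\Theta(\log(r)^{d-1})$. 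This yields $S(r)=\Theta(r^{d+1}\log(r)^d)=\Theta(u_{d+1}(r))$, which closes the induction and gives $\mathcal U_d\sim u_d$.

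I expect the cancellation of the powers of $2^m$ to be the conceptual core of the argument: it reflects the exact balance between the density $\sim 2^{-m}$ of the integers of valuation $m$ and the dilation by $2^{m/d}$ applied to each hyperplane slice. The only genuinely delicate point is the uniform two-sided control $\log(2^{m/d}r)\asymp\log r$, which must be kept with explicit constants throughout the summation so that the inductive constants $c_d,C_d$ can actually be propagated from dimension $d$ to dimension $d+1$.
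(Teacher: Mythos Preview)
Your proof is correct and follows essentially the same route as the paper: induction on $d$ using the $L^\infty$ norm, reduction (up to a bounded factor) to a single piece $\varphi_i^d(\mathcal U_d)$, grouping the integer coordinate by its $2$-valuation, and the key cancellation $2^{-m}\cdot (2^{m/d}r)^d=r^d$ leaving $\Theta(\log r)$ summands each of order $r^{d+1}\log(r)^{d-1}$. The only cosmetic difference is that the paper evaluates the growth function at dyadic radii $r=2^m$ and writes the computation with $\sim$, whereas you keep $r$ continuous and track explicit $\Theta$-constants via the sandwich $S(r)\le N_{d+1}(r)\le (d+1)S(r)$.
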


\begin{proof}[Proof of theorem \ref{thmmainA}] 
By proposition \ref{propgrcl}, there exists a constant $c_d$  such that
the set $\mathcal U_d$ constructed in corollary \ref{corconstrSn} 
has at most $c_du_d(r)$ elements at distance
at most $r$ from the origin. Considering the rescaled set
$t\mathcal U_d$ for $t>c_d^{1/d}$ ends the proof.
\end{proof}

\begin{proof}[Proof of proposition \ref{propgrcl}] We proceed by 
induction on the dimension $d$. For $d=1$,
$u_1(r)=r$ hence $\mathcal U_1=\mathbb Z$ belongs to the growth class
of $u_1$.

Before starting the proof of the induction step, let us remark
that the growth class of the function $u_d$ contains 
all functions in $\mathcal G$ which can be written as
$\lambda(r)u_d(r)$
where $r\longmapsto \lambda(r)$ is a bounded function.
This fact allows to neglect bounded factors involved in $u_d$
or $u_{d+1}$.

We assume now that $\mathcal U_d$ is 
of growth class $u_d$ for some $d\ge 1$. 
Up to a bounded factor,
the growth class of $\mathcal U_{d+1}$ is described by the set 
$\mathcal B\subset\mathbb N\times \mathbb R^d$ defined as
$$
\mathcal B=
\bigcup_{m\ge 1}(m,2^{-v_2(m)/d}{\mathcal U}_d)
=
\bigcup_{n\ge 0}(2^n(1+2\mathbb N),2^{-n/d}{\mathcal U}_d).
$$

We work with the $L^\infty$ norm $\parallel x\parallel_\infty$
already encountered in section \ref{sectconstr}.
Using the fact that growth classes are increasing and 
that bounded factors in $u_{d+1}$ can be neglected, it
is enough to compute the growth function 
$$\beta(r)=\sharp\left(\mathcal B\cap\{x\in\mathbb R^{d+1}\ \vert 
\ \parallel x\parallel_\infty<r\}\right)$$
counting elements of $\mathcal B$ in open balls of radius a power of $2$.

Neglecting boundary effects and using the fact that 
the set $\{1,\dots,2^m-1\}$ contains exactly 
$2^{m-n-1}$ integers of the form $2^n(1+2\mathbb N)$,
we have 
$$\beta(2^m)\sim\sum_{n=0}^{m-1}2^{m-n-1}u_d(2^{m+n/d})$$
$$\sim\sum_{n=0}^{m-1}2^{m-n-1}\left(m+\frac{n}{d}\right)^{d-1}2^{dm+n}
\sim m^d2^{(d+1)m}$$
which shows that $\beta$ is in the growth class of $u_{d+1}$.
\end{proof}

{\bf Acknowledgements} I thank D. Piau, P. de la Harpe, 
G. Mac Shane, B. Sevennec 
and Y. Colin de Verdi\`ere for interesting discussions and 
for many helpful remarks improving the exposition.


\noindent Roland BACHER

\noindent INSTITUT FOURIER

\noindent Laboratoire de Math\'ematiques

\noindent UMR 5582 (UJF-CNRS)

\noindent BP 74

\noindent 38402 St Martin d'H\`eres Cedex (France)
\medskip

\noindent e-mail: Roland.Bacher@ujf-grenoble.fr

\begin{thebibliography}{99}

\bibitem{CS} 
Conway, J. H.; Sloane, N. J. A. 
Sphere packings, lattices and groups. 
Third edition. Springer-Verlag, New York, 1999.

\bibitem{ER} Erd\"os, P.; Rogers, C. A.
{\it Covering space with convex bodies}, 
Acta Arith. 7 1961/1962 281--285. 

\bibitem{Gr} Gruber, Peter M. 
Convex and discrete geometry. 
Grundlehren der Mathematischen Wissenschaften, 336. Springer, Berlin, 2007.

\bibitem{R} Rogers, C. A.
{\it A note on coverings}, 
Mathematika 4 (1957), 1--6. 

\end{thebibliography}
\end{document}